\newcommand{\R}{\mathbb R}
\newcommand{\GG}{\mathcal G}
\newcommand{\N}{\mathbb N}
\newcommand{\Hi}{\mathcal H}
\newcommand{\RX}{\,]-\infty,+\infty]}
\newcommand{\RPP}{\ensuremath{\left]0,+\infty\right[}}
\newcommand{\RP}{\ensuremath{\left[0,+\infty\right[}}
\newcommand{\ifaf}{\:\Leftrightarrow\:}
\newcommand{\menge}[2]{\left\{{#1}~\middle|~{#2}\right\}} 
\newcommand{\scal}[2]{{\left\langle{#1}~\middle|~{#2}\right\rangle}}
\newcommand{\emp}{\ensuremath{{\varnothing}}}
\newcommand\norm[1]{\left\|#1\right\|}
\newcommand{\bigp}[1]{\left( #1 \right)}
\newcommand{\ostar}{\mathbin{\mathpalette\make@circled\ast}}
\newcommand{\make@circled}[2]{%
  \ooalign{$\m@th#1\smallbigcirc{#1}$\cr\hidewidth$\m@th#1#2$\hidewidth\cr}%
}
\newcommand{\smallbigcirc}[1]{%
  \vcenter{\hbox{\scalebox{0.77778}{$\m@th#1\bigcirc$}}}%
}
\newcommand{\persp}[1]{\widetilde{#1}}
\def\overbar#1{\ThisStyle{%
  \setbox0=\hbox{$\SavedStyle#1$}%
  \stackengine{1.2\LMpt}{$\SavedStyle#1$}{\rule{\wd0}{.8\LMpt}}{O}{c}{F}{F}{S}%
}}
\newcommand{\cdom}{\overbar{\dom}\,}
\def\keywords#1{\par\addvspace\medskipamount{\rightskip=0pt 
plus1cm
\def\and{\ifhmode\unskip\nobreak\fi\ $\cdot$
}\noindent\keywordname\enspace\ignorespaces#1\par}}
\def\keywordname{{\bfseries Keywords}}
\def\acknowledgement{\par\addvspace{17pt}\small\rmfamily
\trivlist\if!\ackname!\item[]\else
\item[\hskip\labelsep
{\bfseries\ackname}]\fi}
\DeclareMathOperator{\dom}{dom}
\DeclareMathOperator{\prox}{prox}
\DeclareMathOperator{\Id}{Id}
\DeclareMathOperator*{\argmin}{arg\,min}
\DeclareMathOperator*{\rec}{rec}
\DeclareMathOperator{\inter}{int}
\DeclareMathOperator{\med}{mid}
\newtheorem{theorem}{Theorem}[section]
\newtheorem{proposition}{Proposition}[section]
\newtheorem{lemma}{Lemma}[section]
\theoremstyle{definition}
\newtheorem{definition}{Definition}[section]
\newtheorem{example}{Example}[section]
\newtheorem{remark}{Remark}[section]
\numberwithin{equation}{section}
\title{\Large \textbf{Enhanced computation of the proximity 
operator\\ 
for perspective functions}}
\author{Luis M. Brice\~{n}o-Arias$^1$, Crist\'obal 
Vivar-Vargas$^2$\\[.5cm]
\normalsize{$^1$Universidad T\'ecnica Federico Santa Mar\'ia, 
Departamento de Matem\'atica, Santiago, Chile} \\
\normalsize{\texttt{\href{mailto:luis.briceno@usm.cl}{luis.briceno@usm.cl}}}
 \\[2.5mm]
\normalsize{$^2$Universidad T\'ecnica Federico Santa Mar\'ia, 
Departamento de Matem\'atica, Santiago, Chile} \\
\normalsize{\texttt{\href{mailto:cristobal.vivar@usm.cl}{cristobal.vivar@usm.cl}}}}
\date{}
\begin{document}
\begin{titlepage}
	\centering
	\maketitle
	\vspace*{1cm}
	\begin{abstract}
In this paper we provide an explicit expression for the proximity 
operator of a perspective of any proper lower semicontinuous 
convex function defined on a Hilbert space. Our computation 
enhances and generalizes known formulae for the case when the 
Fenchel conjugate of the convex function has open domain or 
when it is radial.
We provide several examples of non-radial functions for which the 
domain of its conjugate is not open and we compute the proximity 
operators of their perspectives. 
\end{abstract}
\keywords{Convex analysis \and Fenchel conjugate  \and 
Perspective function \and Proximity operator \and Recession 
function}
\textbf{MSC (2020)}: 46N10 $\cdot$ 47J20 $\cdot$ 49J53 $\cdot$ 49N15 
$\cdot$ 90C25.
\end{titlepage}
\section{Introduction}
\label{sec:1}
In this paper we enhance the computation of the proximity operator 
of the perspective of a proper lower semicontinuous convex 
function defined in a real Hilbert space $\Hi$.
This construction is introduced in \cite{rockafellar1966level} and 
appears naturally
in optimal mass transportation theory 
\cite{benamou2000computational,villani2021topics},
dynamical formulation of the 2-Wasserstein distance 
\cite{benamou2000computational,villani2021topics}, information 
theory \cite{elGheche2017proximity}, physics \cite{bercher2013}, 
operator theory \cite{effros2009}, statistics \cite{owen2007}, matrix 
analysis \cite{dacorogna2008},
signal processing and inverse problems 
\cite{kuroda2021convex,kuroda2022block,micchelli2013},
JKO \cite{jordan1998variational} schemes for gradient flows in the 
space of probability measures 
\cite{benamou2016augmented,carrillo2022primal}, and 
transportation and mean field games problems with 
state-dependent potentials
\cite{briceno2018proximal,cardaliaguet2015weak}, among other 
disciplines. 

The proximity operator of the perspective of a proper lower 
semicontinuous convex function $f\colon\Hi\to\RX$ is first obtained 
in \cite[Theorem~3.1]{combettes2018perspectiveprox} in the case 
when the domain of $f^*$ is open, where $f^*$ denotes the 
Fenchel-Legendre conjugate of $f$. Some examples in the case 
when the domain $f^*$ is closed are also explored in 
\cite[Section~3.2]{combettes2018perspectiveprox}. 
These results need the solution of an inclusion in $\Hi$ in order to 
compute the proximity operator of the perspective of $f$. In the 
case when $f$ is radial, the proximity operator of the perspective of 
$f$ is studied without any assumptions on the domain of its 
conjugate in \cite[Proposition 2.3]{combettes2020perspective}. 
This calculus needs a projection onto a particular convex subset of 
$\R^2$, which is not always easy to compute.
Previous results do not allow the computation of the proximity 
operator of the perspective of non-radial functions $f$ such that the 
domain of $f^*$ is not open. This is the case, for instance, of 
entropy-based penalizations including the log-sum, which arises, 
e.g.,  in optimal transport problems and mathematical programming 
\cite{auslender1997asymptotic,Pegon23}.

The goal of this paper is to provide a simple computation of the 
proximity operator of the perspective of any proper lower 
semicontinuous convex function without any further assumption. 
Our computation generalizes the results in 
\cite{combettes2018perspectiveprox} and 
\cite{combettes2020perspective} and the solution of a scalar 
equation is needed, which can be obtained via standard 
root-finding methods \cite{press2007numerical}. We provide 
several examples of non-radial functions $f$ such that the domain 
of $f^*$ is not open, for which the proximity operator is not 
available in the literature. 

The paper is organized as follows. In Section~\ref{sec:2} we 
present our notation and preliminary results. The proximity 
operator for the perspective of $f$ is presented in 
Section~\ref{sec:3}. Examples are studied in Section~\ref{sec:4}.

\section{Notation and preliminaries}
\label{sec:2}
\subsection{Notation}
Throughout this paper, $\mathcal{H}$ is a real Hilbert space 
endowed with the inner product $\scal{\cdot}{\cdot}$ and 
associated norm $\norm{\cdot}$. $\Hi \oplus \R$ denotes the 
Hilbert direct sum between $\Hi$ and $\R$. 

Let $f: \Hi \to \RX$. The domain of $f$ is $\dom f = \menge{x \in 
\Hi} {f(x) < +\infty}$ and $f$ is proper if $\dom f\neq \emp$. 
Denote by $\Gamma_0(\mathcal{H})$ the class of proper lower 
semicontinuous convex functions from 
$\Hi$ to $]-\infty, +\infty]$. 
Suppose that $f \in \Gamma_0(\Hi)$. 
The recession function of $f$ is
\begin{equation}
\label{eq:recession}
(\forall x_0 \in \dom f) (\forall x \in \Hi)\quad \rec f(x) = \lim_{t \to 
+\infty} \frac{f(x_0 + t x) - f(x_0)}{t}
\end{equation}
and it satisfies (see \cite[Proposition~7.13 \& 
Proposition~13.49]{bauschke2011convex})
\begin{equation}
\label{e:recession_char}
\rec{f} = \sigma_{\dom f^*} = \sigma_{\;\cdom f^*}.
\end{equation}
The Fenchel conjugate of $f$ is 
\begin{align}
\label{eq:conjugate}
f^*: \Hi \to \RX: u \mapsto \sup_{x \in \Hi} \bigp{\scal{x}{u} - f(x)}.
\end{align} 
We have $f^*\in\Gamma_0(\Hi)$, $f^{**} = f$, and we have the 
Fenchel-Young inequality 
\cite[Proposition~13.15]{bauschke2011convex}
\begin{align}
(\forall x \in \Hi)(\forall u \in \Hi) \quad f(x) + f^*(y) \geq 
\scal{x}{u}.\label{eq:fenchel_young}
\end{align}
The subdifferential of $f$ is the set-valued operator
\begin{align}
\label{eq:subdifferential}
\partial f: \Hi \to 2^{\Hi} : x \mapsto 
\menge{u \in \Hi}{(\forall y \in \Hi) \quad \scal{y-x}{u} + f(x) \leq f(y)}
\end{align}  
and $\dom \partial f=\menge{x\in\Hi}{\partial f(x)\neq\emp}$.
We have the
Fenchel-Young identity 
\cite[Proposition~16.10]{bauschke2011convex}
\begin{align}
(\forall x \in \Hi)(\forall u \in \Hi) \quad u \in \partial f(x) 
\quad\Leftrightarrow\quad f(x) + f^*(u) = \scal{x}{u}. 
\label{eq:fenchel_young_iden}
\end{align}
The \textit{proximity operator} of $f$ is
\begin{align}
\label{eq:prox_def}
\prox_f : \Hi \to \Hi : x \mapsto \argmin_{y \in \Hi} \left(f(y) + 
\frac{1}{2} \norm{x-y}^2\right),
\end{align}
which is characterized by
\begin{equation}
\label{eq:prox_char}
(\forall x \in \Hi)(\forall p \in \Hi) \quad p = \prox_f x 
\quad\Leftrightarrow\quad x-p \in \partial f(p)
\end{equation}
and satisfies 
\begin{align}
\label{eq:moreau_decomp}
(\forall\gamma \in\RPP)\quad  \prox_{\gamma f} =\Id - \gamma 
\prox_{f^*/\gamma}\circ (\Id/\gamma),
\end{align}
where $\Id\colon\Hi\to\Hi$ denotes the identity operator.

Let $C\subset\Hi$ be a nonempty closed convex set. The indicator 
function of $C$ is
\begin{equation}
 \label{eq:indicator}
\iota_C: \Hi \to \RX: x \mapsto 
\begin{cases}
0, &  \text{if }x \in C;\\
+\infty, &\text{if }x \not\in C,    
\end{cases}   
\end{equation}
its support function is
\begin{align}
\sigma_C : \Hi \to\RX: u \mapsto\sup_{x \in C} \scal{x}{u}, 
\label{eq:support}
\end{align}
and we have $\sigma_C= (\iota_C)^*$. The projection operator 
onto $C$ is $P_C=\prox_{\iota_C}$, which is characterized by
\begin{equation}
\label{eq:projection_char}
(\forall x \in \Hi)(\forall y \in C) \quad \scal{y - P_C x}{x - P_C x} \leq 
0.
\end{equation}
For further background on convex analysis, the reader is referred 
to \cite{bauschke2011convex}. 
\subsection{Perspective functions and properties}
Now, we review essential properties of perspective functions. We 
refer the reader to \cite{combettes2018perspective} for further 
background.
\begin{definition}\label{def:perspective} Let $f \in 
\Gamma_0(\Hi)$. The \textit{perspective} of $f$ is:
\begin{equation}
\widetilde{f}\colon\Hi\times\R\to\RX\colon
 (x,\eta) \mapsto 
\begin{cases}
\eta f \bigp{\dfrac{x}{\eta}},&\text{if}\:\:\eta > 0; \\[3mm]
(\rec f) (x), & \text{if}\:\:\eta = 0; \\[2mm]
+\infty, & \text{if}\:\:\eta < 0.
\end{cases}    
\label{eq:persp_def}
\end{equation}
\end{definition} 
\begin{lemma}
\label{lemma:persp_prop}
Let $f \in \Gamma_0(\Hi)$. Then the following hold:
\begin{enumerate}[label = \normalfont(\roman*)]
\item
\label{lemma:persp_propi}
$\persp{f} \in \Gamma_{0}(\Hi\oplus \R)$.
\item
\label{lemma:persp_propi+}
$\persp{f}$ is not radial, i.e., there exist $(x,\eta)$ and $(y,\nu)$ in 
$\Hi\times\R$ such that $\|(x,\eta)\|=\|(y,\nu)\|$ and 
$\persp{f}(x,\eta)\ne\persp{f}(y,\nu)$.
\item
\label{lemma:persp_propii}
Let $C = \menge{(x,\eta) \in \Hi \times \R}{\eta + f^*(x) \leq 0}$. 
Then $\bigp{\persp{f}}^* = \iota_C$. 
\end{enumerate}
\end{lemma}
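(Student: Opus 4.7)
The plan is to prove (iii) first, from which (i) will follow by Fenchel biconjugacy, and then to dispatch (ii) with an explicit pair of vectors. The key observation is that if $C$ is nonempty, closed, and convex, then $\iota_C \in \Gamma_0(\Hi \oplus \R)$, so $\sigma_C = \iota_C^{*} \in \Gamma_0(\Hi \oplus \R)$ and $(\sigma_C)^{*} = \iota_C^{**} = \iota_C$. Thus identifying $\widetilde{f} = \sigma_C$ pointwise would yield both (i) and (iii) in one stroke, via $(\widetilde{f})^{*} = (\sigma_C)^{*} = \iota_C$.

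To verify the properties of $C$: nonemptiness follows from choosing any $u_0 \in \dom f^{*}$ (nonempty since $f^{*}\in\Gamma_0(\Hi)$) and setting $\rho_0 = -f^{*}(u_0)$; closedness follows since $C$ is the sublevel set at $0$ of the proper lsc convex function $(u,\rho) \mapsto f^{*}(u) + \rho$; convexity is immediate. The identification $\widetilde{f} = \sigma_C$ reduces to a case analysis using the characterization $(u,\rho) \in C \Leftrightarrow u \in \dom f^{*}$ and $\rho \leq -f^{*}(u)$. For $\eta > 0$, the supremum over $\rho$ is attained at $\rho = -f^{*}(u)$, giving $\sup_{u\in\dom f^{*}}\bigp{\scal{x}{u} - \eta f^{*}(u)} = \eta f^{**}(x/\eta) = \eta f(x/\eta)$ by $f^{**}=f$. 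For $\eta = 0$, the term $\eta\rho$ vanishes and the supremum reduces to $\sigma_{\dom f^{*}}(x) = \rec f(x)$ via \eqref{e:recession_char}. For $\eta < 0$, fixing any $u \in \dom f^{*}$ and sending $\rho \to -\infty$ drives $\eta \rho$ to $+\infty$, matching $\widetilde{f}(x,\eta) = +\infty$.

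For (ii), it suffices to choose any $x_0 \in \dom f$ and observe that $(x_0, 1)$ and $(x_0, -1)$ share the norm $\sqrt{\norm{x_0}^2 + 1}$, while $\widetilde{f}(x_0, 1) = f(x_0) \in \R$ is finite and $\widetilde{f}(x_0, -1) = +\infty$. I expect the main technical nuisance to be the $\eta = 0$ case of the identification $\widetilde{f} = \sigma_C$, since it requires invoking \eqref{e:recession_char} to pass from $\sigma_{\dom f^{*}}$ to $\rec f$ (noting that $\dom f^{*}$ need not be closed); all other cases reduce to one-line convex-analytic identities.
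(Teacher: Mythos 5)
Your proof is correct. For parts (i) and (iii) the paper does not actually argue: it simply cites \cite[Proposition~2.3(ii) and (iv)]{combettes2018perspective}, whereas you give a self-contained derivation by verifying pointwise that $\widetilde{f}=\sigma_C$ (splitting into $\eta>0$, $\eta=0$, $\eta<0$, using $f^{**}=f$ and \eqref{e:recession_char}), after which both claims drop out of biconjugation: $C$ nonempty, closed, and convex gives $\iota_C\in\Gamma_0(\Hi\oplus\R)$, hence $\sigma_C=(\iota_C)^*\in\Gamma_0(\Hi\oplus\R)$ and $(\widetilde{f})^*=(\sigma_C)^*=\iota_C^{**}=\iota_C$. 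This is essentially the argument behind the cited result, so nothing is gained in generality, but your version keeps the lemma independent of the reference; the one step worth spelling out is that in the case $\eta>0$ the supremum over $u\in\dom f^*$ coincides with the supremum over all of $\Hi$ (the expression is $-\infty$ outside $\dom f^*$), which is what lets you recognize $\eta f^{**}(x/\eta)$. For part (ii) your pair $(x_0,1)$ versus $(x_0,-1)$ with $x_0\in\dom f$ is the same construction as the paper's $(x,\eta)$ versus $(x,-\eta)$.
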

\begin{proof}
\ref{lemma:persp_propi}: \cite[Proposition 
2.3(ii)]{combettes2018perspective}.

\ref{lemma:persp_propi+}: Let $(x,\eta) \in\Hi\times\RPP$ be such 
that $x/\eta \in \dom f$. Then $\norm{(x,\eta)} = \norm{(x,-\eta)}$ 
and $\persp{f}(x,\eta) = \eta f(x/\eta) < +\infty = \persp{f}(x,-\eta)$. 
Hence $f$ is not radial. 

\ref{lemma:persp_propii}: \cite[Proposition 
2.3(iv)]{combettes2018perspective}. \qed
\end{proof}

\subsection{Preliminaries on proximity operators}
We now provide some preliminary results 
needed in the following sections.
\begin{lemma}{\normalfont 
\cite[Lemma~3.1(iii)-(iv)]{bricenoarias2023proximity}}
\label{lemma:prox_fenchel}
Let $f \in \Gamma_0(\Hi)$, let $\gamma \in \RPP$, and let $(x,p) 
\in \Hi\times\Hi$. Then the following propositions are equivalent:
\begin{enumerate}[label = \normalfont(\roman*)]
\item $p = \prox_{\gamma f}x$.
\item $f(p) + f^*\left((x-p)/\gamma\right) = \scal{p}{(x-p)/\gamma}$.
\item $(\forall y \in \Hi) \quad \scal{x-p}{y-p} + \gamma f(p) \leq  
\gamma f(y)$.
\end{enumerate}
\end{lemma}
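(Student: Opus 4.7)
The plan is to derive both equivalences \textbf{(i)$\Leftrightarrow$(ii)} and \textbf{(i)$\Leftrightarrow$(iii)} by combining the prox characterization \eqref{eq:prox_char} with, respectively, the Fenchel--Young identity \eqref{eq:fenchel_young_iden} and the definition of the subdifferential \eqref{eq:subdifferential}. The pivot in both cases is the same simple observation: since $\gamma>0$, the subdifferential calculus gives $\partial(\gamma f)(p)=\gamma\,\partial f(p)$, so \eqref{eq:prox_char} applied to $\gamma f$ yields
\begin{equation*}
p=\prox_{\gamma f}x \quad\Leftrightarrow\quad x-p\in\gamma\,\partial f(p) \quad\Leftrightarrow\quad \frac{x-p}{\gamma}\in \partial f(p).
\end{equation*}

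For \textbf{(i)$\Leftrightarrow$(ii)}, I would feed the condition $(x-p)/\gamma\in\partial f(p)$ into the Fenchel--Young identity \eqref{eq:fenchel_young_iden} with $u=(x-p)/\gamma$, which directly reads
\begin{equation*}
f(p) + f^*\!\left(\frac{x-p}{\gamma}\right) = \left\langle p \,\middle|\, \frac{x-p}{\gamma}\right\rangle,
\end{equation*}
that is, statement (ii). Since \eqref{eq:fenchel_young_iden} is an equivalence, no converse argument is needed.

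For \textbf{(i)$\Leftrightarrow$(iii)}, I would unfold $(x-p)/\gamma\in\partial f(p)$ using the definition \eqref{eq:subdifferential}: this is equivalent to
\begin{equation*}
(\forall y\in\Hi)\quad \left\langle y-p \,\middle|\, \frac{x-p}{\gamma}\right\rangle + f(p) \leq f(y).
\end{equation*}
Multiplying through by $\gamma>0$ (which preserves the inequality and, crucially, is reversible since $\gamma\neq 0$) produces exactly statement (iii). The main \emph{conceptual} obstacle, and really the only one, is keeping the scaling by $\gamma$ consistent throughout; there is no analytical difficulty here, the result is a bookkeeping consequence of standard convex-analytic identities already recorded in Section~\ref{sec:2}.
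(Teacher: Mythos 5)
Your proof is correct: the paper itself gives no proof of this lemma (it is quoted directly from \cite[Lemma~3.1(iii)-(iv)]{briceñoarias2023proximity}), and your argument — reducing (i) to $(x-p)/\gamma\in\partial f(p)$ via \eqref{eq:prox_char} and the identity $\partial(\gamma f)=\gamma\,\partial f$, then invoking \eqref{eq:fenchel_young_iden} for (ii) and the definition \eqref{eq:subdifferential} for (iii) — is exactly the standard route and is complete.
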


\begin{proposition}
\label{prop:prox_proj}
	Let $f \in \Gamma_0(\Hi)$, let $\gamma \in \RPP$, and let $x \in 
	\Hi$. Then the following hold:
\begin{enumerate}[label = \normalfont(\roman*)]
\item We have
\label{prop:prox_proji}
 \begin{equation}
    \label{eq:prox_proj}
	-\scal{ x-P_{\; \cdom f}\, x}{\prox_{\gamma f} x - P_{\; \cdom f}\, 
	x} + \norm{P_{\; \cdom f}\, x - \prox_{\gamma f} x}^2\leq \gamma 
	\bigp{f\bigp{P_{\; \cdom f}\, x} - f \bigp{\prox_{\gamma f} x}}
\end{equation}
\item
\label{prop:prox_projii}
$f \left( \prox_{\gamma f} x \right) \leq f \left( P_{\; \cdom f}\, x 
\right).$
\end{enumerate}	
 \end{proposition}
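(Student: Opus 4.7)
The plan is to get part (i) as a direct consequence of Lemma~2.1(iii) with the test point $y=P_{\cdom f}\,x$, and then to obtain part (ii) by combining (i) with the projection inequality \eqref{eq:projection_char}.

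For part (i), set $p=\prox_{\gamma f}x$ and $q=P_{\cdom f}\,x$. Lemma~2.1(iii) (which is just the variational characterization of the proximity operator) gives, for every $y\in\Hi$,
$\scal{x-p}{y-p}+\gamma f(p)\le \gamma f(y)$. I would choose $y=q$, which is legal because the inequality holds for all $y$ (if $q\notin\dom f$ the statement is trivial since the right-hand side of \eqref{eq:prox_proj} equals $+\infty$). The elementary algebraic identity
\[
-\scal{x-q}{p-q}+\|q-p\|^2 \;=\; \scal{x-q}{q-p}+\scal{q-p}{q-p} \;=\; \scal{x-p}{q-p}
\]
rewrites the left-hand side of \eqref{eq:prox_proj} precisely as $\scal{x-p}{q-p}$, so the inequality collapses to Lemma~2.1(iii). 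This step is essentially bookkeeping.

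For part (ii), the first thing to check is that $p\in\cdom f$; this is immediate because $p\in\dom\partial f\subseteq\dom f\subseteq\cdom f$. Then the projection characterization \eqref{eq:projection_char} applied with $y=p$ yields $\scal{p-q}{x-q}\le 0$, equivalently $-\scal{x-q}{p-q}\ge 0$. Plugging this into the left-hand side of \eqref{eq:prox_proj} makes that left-hand side a sum of two nonnegative quantities, hence nonnegative. The right-hand side of \eqref{eq:prox_proj} is therefore also nonnegative, and dividing by $\gamma>0$ produces $f(p)\le f(q)$, which is exactly \ref{prop:prox_projii}.

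The only subtlety worth flagging is the case $q\notin\dom f$: here $f(q)=+\infty$, so both \ref{prop:prox_proji} and \ref{prop:prox_projii} are trivially true and no work is needed. Once this degenerate case is dispatched, there is no real obstacle: the proof is just a careful rewriting of Lemma~2.1(iii) together with one application of the obtuse-angle property of the projector onto the closed convex set $\cdom f$.
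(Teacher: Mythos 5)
Your proposal is correct and follows essentially the same route as the paper: part (i) is Lemma~2.1(iii) evaluated at $y=P_{\;\cdom f}\,x$ after the algebraic rewriting $-\scal{x-q}{p-q}+\|q-p\|^2=\scal{x-p}{q-p}$, and part (ii) combines (i) with the obtuse-angle characterization \eqref{eq:projection_char} using $p\in\dom\partial f\subset\cdom f$. Your explicit handling of the degenerate case $P_{\;\cdom f}\,x\notin\dom f$ is a small extra care the paper leaves implicit, but it changes nothing substantive.
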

\begin{proof}
\ref{prop:prox_proji}:
	Let $x$ and $p$ in $\Hi$ be such that $p = \prox_{\gamma f} x$. 
	Then, by setting $y = P_{\; \cdom f}\, x$, it follows from 
	Lemma~\ref{lemma:prox_fenchel} that
	\begin{equation}
    \label{eq:prox_proj_proof_2}
	- \scal{x- P_{\; \cdom f}\, x }{p - P_{\; \cdom}\, x} + \norm{P_{\; 
	\cdom f}\, x - p}^2\leq \gamma \bigp{f \bigp{P_{\; \cdom f}\, x} - 
	f(p)}, 
	\end{equation}
	and \eqref{eq:prox_proj} follows.

\ref{prop:prox_projii}:
Since $\cdom f$ is nonempty, closed, convex, and 
$\prox_{\gamma f} x \in \dom \partial f \subset \cdom f$, it follows 
from \eqref{eq:projection_char} that
    \begin{equation}
    \label{eq:iprod_prox_proj}
        \scal{x- P_{\; \cdom f}\, x}{\prox_{\gamma f} x - P_{\; \cdom f}\, 
        x} \leq 0.
    \end{equation}
   Hence, the result follows from \ref{prop:prox_proji}.
\end{proof}

\section{Main results}
\label{sec:3}
The proximity operator of a perspective function when $\dom f^*$ 
is open is computed in \cite[Theorem 
3.1]{combettes2018perspectiveprox}. In the case of radial 
functions, this hypothesis is removed in \cite[Proposition 
2.3]{combettes2020perspective}. In this section we compute the 
proximity operator of $\persp{f}$ for any $f \in \Gamma_0(\Hi)$.

\begin{theorem} 
\label{teo:main_result} 
Let $f \in \Gamma_0(\Hi)$, let $\gamma \in \RPP$, and let $(x,\eta) 
\in \Hi \times \R$. Then the following hold:
    \begin{enumerate}[label = \normalfont(\roman*)]
\item 
\label{teo:main_result_i}
Suppose that $\eta + \gamma f^* \big(P_{\;\cdom f^*} \, 
(x/\gamma)\big) \leq 0$. Then
    \begin{equation}
    \label{eq:main_result_i}
        \prox_{\gamma \widetilde{f}}(x,\eta) = \bigp{x - \gamma P_{\; 
        \cdom f^*} \bigp{\frac{x}{\gamma}}, 0}. 
    \end{equation} 
\item 
\label{teo:main_result_ii}    
    Suppose that $\eta + \gamma f^* \big(P_{\;\cdom f^*} 
    \,(x/\gamma)\big) > 0$. Then there exists a unique 
    $\mu\in\big]0,\eta + \gamma f^* \big(P_{\;\cdom f^*} 
    \,(x/\gamma)\big)\big]$ such that
    \begin{equation}
    \label{eq:main_result_iimu}
        \mu = \eta + \gamma f^* 
        \bigp{\prox_{\frac{\mu}{\gamma}f^*}\bigp{\frac{x}{\gamma}}}. 
    \end{equation} 
Furthermore
    \begin{equation}
    \label{eq:main_result_ii}
        \prox_{\gamma \widetilde{f}}(x,\eta) = \bigp{x - \gamma 
        \prox_{\frac{\mu}{\gamma} f^{*}} \bigp{\frac{x}{\gamma}},\mu}.
    \end{equation}
    \end{enumerate}
\end{theorem}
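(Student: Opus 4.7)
The plan is to combine Moreau's decomposition~\eqref{eq:moreau_decomp} with the identification $\widetilde{f}^{*} = \iota_{C}$ provided by Lemma~\ref{lemma:persp_prop}\ref{lemma:persp_propii}. Because $\iota_{C}$ is invariant under positive scaling, $\prox_{\widetilde{f}^{*}/\gamma} = P_{C}$, so~\eqref{eq:moreau_decomp} gives
\begin{equation*}
\prox_{\gamma \widetilde{f}}(x,\eta) \;=\; (x,\eta) - \gamma\, P_{C}\bigl(x/\gamma,\, \eta/\gamma\bigr),
\end{equation*}
and the whole theorem reduces to computing $P_{C}(x/\gamma,\eta/\gamma)$, which I will verify in each case via the projection characterization~\eqref{eq:projection_char}. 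The two cases correspond geometrically to whether the projection lies on the horizontal face $\cdom f^{*} \times \{\eta/\gamma\}$ (case~\ref{teo:main_result_i}) or on the active boundary $\{\nu + f^{*}(u) = 0\}$ (case~\ref{teo:main_result_ii}).

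For case~\ref{teo:main_result_i} my approach is guess-and-check: I would propose $(u^{*},\nu^{*}) := (P_{\;\cdom f^{*}}(x/\gamma),\, \eta/\gamma)$ as the projection. The hypothesis $\eta + \gamma f^{*}(P_{\;\cdom f^{*}}(x/\gamma)) \leq 0$ forces $P_{\;\cdom f^{*}}(x/\gamma) \in \dom f^{*}$ and puts $(u^{*},\nu^{*}) \in C$. Since the second coordinate of $(x/\gamma,\eta/\gamma) - (u^{*},\nu^{*})$ vanishes,~\eqref{eq:projection_char} collapses to $\scal{u - u^{*}}{x/\gamma - u^{*}} \leq 0$ for every $(u,\nu) \in C$, which holds because each such $u$ lies in $\dom f^{*} \subset \cdom f^{*}$, and $u^{*} = P_{\;\cdom f^{*}}(x/\gamma)$.

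For case~\ref{teo:main_result_ii} the constraint must be active at the optimum, $\nu^{*} = -f^{*}(u^{*})$, and the natural KKT multiplier $\mu := \eta - \gamma \nu^{*} = \eta + \gamma f^{*}(u^{*})$ turns the stationarity condition $x/\gamma - u^{*} \in (\mu/\gamma)\partial f^{*}(u^{*})$ into $u^{*} = \prox_{(\mu/\gamma) f^{*}}(x/\gamma)$, yielding the scalar equation~\eqref{eq:main_result_iimu}. To produce $\mu$ rigorously I would study
\begin{equation*}
\phi(\mu) \;:=\; \mu - \eta - \gamma\, f^{*}\bigl(\prox_{(\mu/\gamma) f^{*}}(x/\gamma)\bigr), \qquad \mu > 0,
\end{equation*}
and show that $\phi$ is continuous and strictly increasing with $\lim_{\mu \downarrow 0}\phi(\mu) = -\eta - \gamma f^{*}(P_{\;\cdom f^{*}}(x/\gamma)) < 0$ (by hypothesis) and $\lim_{\mu \to +\infty}\phi(\mu) = +\infty$. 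The key analytic ingredient is that $\lambda \mapsto \inf_{u}\bigl(\lambda f^{*}(u) + \tfrac{1}{2}\|u - x/\gamma\|^{2}\bigr)$ is concave with derivative $f^{*}(\prox_{\lambda f^{*}}(x/\gamma))$, so this derivative is nonincreasing in $\lambda$; combining this with Proposition~\ref{prop:prox_proj}\ref{prop:prox_projii} applied to $f^{*}$ pins down the boundary value as $\mu \downarrow 0$. Once $\mu$ is produced, verifying that the candidate $(u^{*},\nu^{*}) := \bigl(\prox_{(\mu/\gamma)f^{*}}(x/\gamma),\, (\eta-\mu)/\gamma\bigr)$ satisfies~\eqref{eq:projection_char} is a clean computation combining the subdifferential inequality for $\partial f^{*}$ at $u^{*}$ with the constraint $\nu \leq -f^{*}(u)$ valid on $C$. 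The most delicate step will be this boundary analysis of $\phi$ at $0$, especially when $f^{*}(P_{\;\cdom f^{*}}(x/\gamma))$ is infinite: one must exploit the monotonicity of $\lambda \mapsto f^{*}(\prox_{\lambda f^{*}}(x/\gamma))$ together with the lower semicontinuity of $f^{*}$ to ensure that $\phi$ is well defined, continuous from the right, and still negative near $0$.
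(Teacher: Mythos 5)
Your proposal is correct, but it follows a genuinely different route from the paper's proof. The paper characterizes $(p,\mu)=\prox_{\gamma \persp{f}}(x,\eta)$ through the Fenchel--Young identity of Lemma~\ref{lemma:prox_fenchel} applied to $\persp{f}$, splits on whether the second component $\mu$ of the prox is zero or strictly positive, reduces each branch to the stated formulas by a chain of equivalences (using $(\persp{f})^*=\iota_C$ and \eqref{eq:moreau_decomp}), and finally matches the two branches to the two sign hypotheses by contradiction via the bound $0<\mu\le\eta+\gamma f^*\big(P_{\;\cdom f^*}(x/\gamma)\big)$ coming from Proposition~\ref{prop:prox_proj}\ref{prop:prox_projii}; existence, uniqueness, and localization of $\mu$ are imported from the cited Lemma~3.2 of \cite{briceñoarias2023proximity}. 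You instead push Moreau's decomposition all the way to $\prox_{\gamma \persp{f}}(x,\eta)=(x,\eta)-\gamma P_C(x/\gamma,\eta/\gamma)$ and certify explicit candidates for $P_C$ through the variational inequality \eqref{eq:projection_char} — precisely the mechanism the paper deploys in Remark~\ref{remark:radial} to recover the radial result of \cite{combettes2020perspective}, here promoted to the general Hilbert setting. Uniqueness of the projection then makes the contradiction step unnecessary, which is a genuine simplification of the logic. The price is that you must establish yourself the properties of $\phi(\mu)=\mu-\eta-\gamma f^*\big(\prox_{(\mu/\gamma)f^*}(x/\gamma)\big)$ that the paper outsources: strict monotonicity does follow from concavity of the parametrized envelope $\lambda\mapsto\min_u\big(\lambda f^*(u)+\tfrac12\|u-x/\gamma\|^2\big)$ as you say; continuity is cleanest obtained by writing $\lambda f^*\big(\prox_{\lambda f^*}(z)\big)$ as the (continuous) envelope minus $\tfrac12\|\prox_{\lambda f^*}(z)-z\|^2$ and using continuity of $\lambda\mapsto\prox_{\lambda f^*}(z)$; the coercivity $\phi(\mu)\to+\infty$ follows from $\phi(\mu)\ge\mu-\eta-\gamma f^*\big(P_{\;\cdom f^*}(x/\gamma)\big)$, which simultaneously yields the claimed upper bound on the root; and the boundary value as $\mu\downarrow 0$ requires the standard fact $\prox_{\lambda f^*}(z)\to P_{\;\cdom f^*}(z)$ combined with lower semicontinuity of $f^*$ and Proposition~\ref{prop:prox_proj}\ref{prop:prox_projii} — all true, and you correctly single this out as the delicate step.
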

\begin{proof}
    First note that 
    Lemma~\ref{lemma:persp_prop}\ref{lemma:persp_propi} asserts 
    that $\widetilde{f}\in\Gamma_0(\Hi\oplus \R)$. Let $(p,\mu) \in 
    \Hi \times \R$ be such that $(p, \mu) = \prox_{\gamma \persp f} 
    (x,\eta)$. It follows from Lemma \ref{lemma:persp_prop} and 
    Lemma \ref{lemma:prox_fenchel} that
    \begin{align}
    \label{e:auxt}
    (p,\mu) = \prox_{\gamma \persp{f}}(x,\eta)
    & \:\:\ifaf\:\:
    \persp{f}(p,\mu)  + \left(\persp{f}\right)^* \left(\frac{x-p}{\gamma}, 
    \frac{\eta-\mu}{\gamma}\right) = 
    \scal{(p,\mu)}{\left(\frac{x-p}{\gamma}, 
    \frac{\eta-\mu}{\gamma}\right)}\nonumber\\[2mm]
    & \:\:\ifaf\:\: \persp{f}(p,\mu)+ \iota_C \left(\frac{x-p}{\gamma}, 
    \frac{\eta-\mu}{\gamma}\right) = \scal{p}{\frac{x-p}{\gamma}} + 
    \mu \left(\frac{\eta-\mu}{\gamma}\right) \nonumber\\[2mm]
    &\:\:\ifaf\:\: \persp{f}(p,\mu) = \scal{p}{\frac{x-p}{\gamma}} + \mu 
    \left(\frac{\eta-\mu}{\gamma}\right) \text{ and }\:\: 
    \frac{\eta-\mu}{\gamma} + f^*\left(\frac{x-p}{\gamma}\right) \leq 0.
    \end{align}
    Moreover, since $(p, \mu) \in \dom\partial\persp{f}\subset \dom 
    \persp{f}$, we have $\mu \in \RP$. Then, let us consider two 
    cases.
\begin{enumerate}[label = \normalfont(\roman*)]
    \item 
    Suppose that $\mu = 0$. Then \eqref{e:auxt}, 
    \eqref{e:recession_char}, Lemma~\ref{lemma:prox_fenchel}, 
    and \eqref{eq:moreau_decomp} imply
    \begin{align}
    \label{eq:main_result_proof_mu0}
    (p, 0) = \prox_{\gamma \persp f} (x,\eta) 
    & \:\Leftrightarrow \: (\rec f)(p) = \scal{p}{\frac{x-p}{\gamma}} 
    \quad\text{and}\quad \frac{\eta}{\gamma} 
    +f^*\bigp{\frac{x-p}{\gamma}} \leq 0 \nonumber\\
    &\:\Leftrightarrow\: \sigma_{\; \cdom f^*}\, (p)+\iota_{\; \cdom 
    f^*}\, \bigp{\frac{x-p}{\gamma}} = 
    \scal{p}{\frac{x-p}{\gamma}}\quad\text{and}\quad 
    \frac{\eta}{\gamma} +f^*\bigp{\frac{x-p}{\gamma}} \leq 0 
    \nonumber\\
    & \:\Leftrightarrow\: p = \prox_{\gamma \sigma_{\, \cdom f^*}}x 
    \quad\text{and}\quad \frac{\eta}{\gamma} 
    +f^*\bigp{\frac{x-p}{\gamma}} \leq 0 \nonumber\\[1mm]
    & \:\Leftrightarrow\: p = x - \gamma P_{\; \cdom 
    f^*}\bigp{\frac{x}{\gamma}}\quad\text{and}\quad \eta + \gamma 
    f^* \bigp{P_{\; \cdom f^*} \bigp{\frac{x}{\gamma}}} \leq 0.
    \end{align}
    \item 
    Suppose that $\mu > 0$. Then it follows from 
    Lemma~\ref{lemma:prox_fenchel}, \eqref{eq:fenchel_young}, 
    and \eqref{eq:moreau_decomp} that
    \begin{align}
    \label{eq:main_result_proof_muP}
    (p, \mu) = \prox_{\gamma \persp f} (x,\eta) &\ifaf \mu 
    f\left(\frac{p}{\mu}\right) =\: \scal{p}{\frac{x-p}{\gamma}} + \mu 
    \left(\frac{\eta-\mu}{\gamma}\right) \nonumber \\
    &\hspace{18mm}\text{and} \:\: \frac{\eta-\mu}{\gamma} + 
    f^*\left(\frac{x-p}{\gamma}\right) \le 0 \nonumber \\[2mm]
    & \ifaf f\left( \frac{p}{\mu}\right) =\: 
    \scal{\frac{p}{\mu}}{\frac{x-p}{\gamma}} + 
    \frac{\eta-\mu}{\gamma} \nonumber \\
    &\hspace{18mm}\text{and}\:\: \frac{\eta-\mu}{\gamma} + 
    f^*\left(\frac{x-p}{\gamma}\right) \le 0 \nonumber \\[2mm]
    & \ifaf f\left(\frac{p}{\mu}\right) + f^*\left(\frac{x - 
    p}{\gamma}\right) = \scal{\frac{p}{\mu}}{\frac{x - p}{\gamma }} + 
    \frac{\eta-\mu}{\gamma} + f^*\left(\frac{x - 
    p}{\gamma}\right)\nonumber\\
    &\hspace{18mm}\text{and}\:\: \frac{\eta-\mu}{\gamma} + 
    f^*\left(\frac{x-p}{\gamma}\right) \leq 0 \nonumber\\
    & \ifaf f\left(\frac{p}{\mu}\right) +f^*\left(\frac{x - p}{\gamma 
    }\right)= \scal{\frac{p}{\mu}}{\frac{x -p}{\gamma}} \nonumber \\
    &\hspace{18mm}\text{and }\:\: \frac{\eta-\mu}{\gamma} = 
    -f^*\left(\frac{x-p}{\gamma}\right) \nonumber \\[2mm]
    &\ifaf \frac{p}{\mu} = \prox_{\frac{\gamma}{\mu} f} 
    \left(\frac{x}{\mu}\right)\:\: 
    \text{and} \:\: \mu = \eta + \gamma 
    f^*\left(\frac{x-p}{\gamma}\right) \nonumber \\
    &\ifaf p = x -\gamma \prox_{\frac{\mu}{\gamma} f^*} 
    \left(\frac{x}{\gamma}\right) \:\:\text{and}\:\: \mu = \eta + \gamma 
    f^*\left(\prox_{\frac{\mu}{\gamma} f^*} 
    \left(\frac{x}{\gamma}\right)\right).
    \end{align}
    Hence, Proposition~\ref{prop:prox_proj}\eqref{prop:prox_projii} 
    implies that
    \begin{equation}
    \label{eq:main_result_proof_ineq}
    0 < \mu  = \eta + \gamma f^* \bigp{\prox_{\frac{\mu}{\gamma} 
    f^*} \bigp{\frac{x}{\gamma}}} \leq \eta + \gamma f^* \bigp{P_{\; 
    \cdom f^*} \bigp{\frac{x}{\gamma}}}.
    \end{equation}
    \end{enumerate}  

    Altogether, if $\eta + \gamma f^* (P_{\; \cdom f^*}\, 
    (x/\gamma))\leq 0$ and supposing that $\mu > 0$, we arrive at a 
    contradiction with \eqref{eq:main_result_proof_ineq} and 
    therefore \eqref{eq:main_result_i} follows from 
    \eqref{eq:main_result_proof_mu0}. Conversely,  if $\eta + 
    \gamma f^* (P_{\; \cdom f^*}\, (x/\gamma)) > 0$ and supposing 
    that $\mu = 0$, we arrive at a contradiction with 
    \eqref{eq:main_result_proof_mu0} and therefore 
    \eqref{eq:main_result_ii} and \eqref{eq:main_result_iimu} follow 
    from \eqref{eq:main_result_proof_muP} and \cite[Lemma~3.2(ii) 
    \& Lemma~3.2(iii)]{bricenoarias2023proximity}.
    \end{proof}

In the case when $f^*$ has open domain, 
Theorem~\ref{teo:main_result} recovers 
\cite[Theorem~3.1]{combettes2018perspectiveprox}, as the 
following example illustrates.
\begin{example}\label{example:plc_opendom} Let $f \in 
\Gamma_0(\Hi)$, let $\gamma \in\RPP$, let $\eta \in \R$, and let 
$x \in \Hi$. Then the following hold:
\begin{enumerate}[label = \normalfont(\roman*)]
    \item
    \label{example:plc_opendom_i} 
    Suppose that $\eta + \gamma f^*\left(x/\gamma\right) \leq 0$. 
    Then $\prox_{\gamma \persp f} (x,\eta) = (0,0)$. 
    \item
    \label{example:plc_opendom_ii} 
    Suppose that $\dom f^*$ is open and that $\eta + \gamma 
    f^*\left(x/\gamma\right) > 0$. Then
    \begin{equation}
        \prox_{\gamma \persp f}(x,\eta) = (x-\gamma p, \eta + 
        \gamma f^*(p)), \label{eq:plc_opendom_prox}
    \end{equation}where $p$ is the unique solution to the inclusion
    \begin{equation}
    \label{eq:plc_opendom_inclusion}
        x \in \gamma p + (\eta + \gamma f^*(p)) \partial f^*(p). 
    \end{equation}
    If $f^*$ is differentiable at $p$, then $p$ is characterized by $y = 
    \gamma p + (\eta + \gamma f^*(p))\nabla f^*(p)$. 
\end{enumerate}
\end{example}
\begin{proof}
    \ref{example:plc_opendom_i}: Note that $\eta + \gamma f^* 
    (x/\gamma) \leq 0$ implies that $x/\gamma \in \dom f^*\subset 
    \cdom f^*$ and then $P_{\; \cdom f^*}\, (x/\gamma) = 
    x/\gamma$. Therefore, it follows from Theorem 
    \ref{teo:main_result}\ref{teo:main_result_i} that
    \begin{align}
        \prox_{\gamma \persp f} (x,\eta) = \left(x - \gamma P_{\,\cdom 
        f^*}\,\left(\frac{x}{\gamma}\right), 0\right) = \left(x - \gamma 
        \frac{x}{\gamma}, 0\right) = (0,0).\label{eq:plc_opendom_leq0}
    \end{align}
    \ref{example:plc_opendom_ii}: Suppose that $\eta + \gamma 
    f^*(P_{\, \cdom f^*}\, (x/\gamma)) \leq 0$. Then, since $\dom f^*$ 
    is open,
    $P_{\;\cdom f^*} (x/\gamma) \in \dom f^* = \inter(\dom f^*)$ and it 
    follows from Theorem \ref{teo:main_result}\ref{teo:main_result_i} 
    that $\prox_{\gamma \persp f} (x,\eta) = (x - \gamma P_{\,\cdom 
    f^*}\,\left(x/\gamma\right), 0)$. Hence, \eqref{e:auxt}, 
    \eqref{e:recession_char}, the fact that $\iota_{\cdom 
    f^*}(P_{\cdom f^*}(x/\gamma))=0$, and 
    \eqref{eq:fenchel_young_iden} yield
    \begin{align}
     \label{eq:plc_opendom_proxgeq0}
        \persp{f} \left(x-\gamma P_{\,\cdom 
        f^*}\left(\frac{x}{\gamma}\right),0\right) &= \rec f \left(x-\gamma 
        P_{\, \cdom f^*} \left(\frac{x}{\gamma}\right)\right) \nonumber \\
        & = \sigma_{\,\cdom f^*}\left(x-\gamma P_{\, \cdom f^*} 
        \left(\frac{x}{\gamma}\right)\right) \nonumber \\
        &= \scal{x-\gamma P_{\,\cdom 
        f^*}\left(\frac{x}{\gamma}\right)}{P_{\,\cdom 
        f^*}\left(\frac{x}{\gamma}\right)} .
    \end{align}
    Therefore, by \cite[Corollary~7.6(i)]{bauschke2011convex}, 
    $P_{\, \cdom f^*} (x/\gamma) \in \cdom f^* \setminus \inter (\dom 
    f^*)$ which is a contradiction.
    Therefore $\eta + \gamma f^*(P_{\, \cdom f^*}\, (x/\gamma)) >0$ 
    and it follows from Theorem 
    \ref{teo:main_result}\ref{teo:main_result_ii} that
    \begin{align}
        \prox_{\gamma \persp f} (x,\eta) = \left(x - \gamma 
        \prox_{\frac{\mu}{\gamma}f^*}\bigp{\frac{x}{\gamma}},\mu 
        \right), \label{eq:plc_opendom_resultproxgeq0} 
    \end{align} where $\mu$ is the solution of the equation
    \begin{align}
        \mu = \eta + \gamma f^* 
        \bigp{\prox_{\frac{\mu}{\gamma}f^*}\bigp{\frac{x}{\gamma}}}. 
        \label{eq:plc_opendom_resultmugeq0} 
    \end{align}Now, given $p \in \Hi$, it follows from 
    \eqref{eq:prox_char} that
    \begin{align}
    \label{eq:plc_opendom_geq0inclusion} 
        p = \prox_{\frac{\mu}{\gamma}f^*}\bigp{\frac{x}{\gamma}} 
        \:\Leftrightarrow\: x \in \gamma p + \mu \partial f^*(p) = 
        \gamma p + (\eta + \gamma f^*(p)) \partial f^*(p). 
    \end{align}
    Hence, \eqref{eq:plc_opendom_prox} and 
    \eqref{eq:plc_opendom_inclusion} follow from 
    \eqref{eq:plc_opendom_resultproxgeq0} and 
    \eqref{eq:plc_opendom_geq0inclusion}. Lastly, the claim when 
    $f^*$ is differentiable follows from $\partial f^*(p) = \{\nabla 
    f^*(p)\}$. \qed
\end{proof}
\begin{remark}
 Note that \cite[Theorem~3.1]{combettes2018perspectiveprox} 
 needs the solution to the inclusion 
 \eqref{eq:plc_opendom_inclusion} for computing $\prox_{\gamma 
 \persp f}$.
 By contrast, Theorem~\ref{teo:main_result} only needs to solve a 
 scalar equation, which can be obtained via standard root-finding 
 methods \cite{press2007numerical}.
\end{remark}
The next result illustrates Theorem~\ref{teo:main_result} in the 
particular case of radial functions.
\begin{proposition}
\label{pro:plc_radial} Let $\phi \in \Gamma_0(\R)$ be even, 
set $f = \phi \circ \norm{\cdot}$, let $\gamma \in \RPP$, and let 
$(x,\eta) \in \Hi \times \R$. Then the following hold:
    \begin{enumerate}[label = \normalfont(\roman*)]
\item 
\label{pro:plc_radial_i}
Suppose that $\eta + \gamma \phi^* \bigp{P_{\;\cdom \phi^*}\, 
(\norm{x}/\gamma)} \leq 0$. Then
    \begin{equation}
        \prox_{\gamma \widetilde{f}}(x,\eta) = 
        \begin{cases}
        \bigp{\bigp{1 - \gamma \dfrac{P_{\; \cdom \phi^*} 
        \bigp{\|x\|/\gamma}}{\|x\|}}x, 0},&\text{if}\:\:x\neq 0;\\[2mm]
        (0,0),&\text{if}\:\:x=0.
        \end{cases} \label{eq:plc_radial_leq0}
    \end{equation} 
\item 
\label{pro:plc_radial_ii}
    Suppose that $\eta + \gamma \phi^* \bigp{P_{\;\cdom \phi^*}\,  
    (\norm{x}/\gamma)} > 0$. Then there exists a unique 
    $\mu\in\RPP$ such that
\begin{equation}
\label{eq:plc_radial_geq0_mu}
\mu = \eta + \gamma \phi^* 
\bigp{\prox_{\frac{\mu}{\gamma}\phi^*}\bigp{\frac{\|x\|}{\gamma}}}. 
\end{equation} 
Furthermore
    \begin{equation}
\label{eq:plc_radial_geq0_prox}
        \prox_{\gamma \widetilde{f}}(x,\eta) = 
        \begin{cases}
            \bigp{\bigp{1 - \gamma \dfrac{\prox_{\frac{\mu}{\gamma} 
            \phi^{*}} \bigp{\|x\|/\gamma}}{\|x\|}}
            x ,\mu},&\text{if}\:\:x\neq 0;\\[2mm]
            (0,\eta+\gamma\phi^*(0)), &\text{if}\:\:x= 0.
        \end{cases}
    \end{equation}
    \end{enumerate}    
\end{proposition}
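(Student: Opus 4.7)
The plan is to view Proposition~\ref{pro:plc_radial} as a specialization of Theorem~\ref{teo:main_result} to $f = \phi\circ\norm{\cdot}$, relying on standard radial reduction formulas. First I would establish the classical identity $f^* = \phi^*\circ\norm{\cdot}$: for any $u\in\Hi$,
\[
f^*(u) = \sup_{x\in\Hi}\bigp{\scal{u}{x} - \phi(\|x\|)} = \sup_{t\in\R}\bigp{t\|u\|-\phi(t)} = \phi^*(\|u\|),
\]
the middle equality by Cauchy--Schwarz (for $u\ne 0$ the supremum is attained along $\R u/\|u\|$; trivial otherwise) and the last by evenness of $\phi$. Consequently $\phi^*\in\Gamma_0(\R)$ is even and $\cdom f^* = \menge{u\in\Hi}{\|u\|\in\cdom\phi^*}$ is a closed ball centered at the origin.

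Next I would record two reduction formulas obtained from the radial structure just established. For every $y\in\Hi\setminus\{0\}$ and every $\lambda\in\RPP$,
\[
P_{\,\cdom f^*}\,y = \frac{P_{\,\cdom\phi^*}\|y\|}{\|y\|}\,y, \qquad \prox_{\lambda f^*}\,y = \frac{\prox_{\lambda\phi^*}\|y\|}{\|y\|}\,y,
\]
the first by inspection of the projection onto a centered ball (noting that $P_{\,\cdom\phi^*}\|y\|\ge 0$ since $\cdom\phi^*$ is symmetric), and the second via the standard radial-prox identity (e.g.\ \cite[Proposition~24.13]{bauschke2011convex}) applied to the even function $\phi^*$; both right-hand sides vanish at $y=0$. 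Since $\phi^*$ is even, it follows that $f^*(P_{\,\cdom f^*}\,y) = \phi^*(P_{\,\cdom\phi^*}\|y\|)$ and $f^*(\prox_{\lambda f^*}\,y) = \phi^*(\prox_{\lambda\phi^*}\|y\|)$.

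With these reductions in hand, I would apply Theorem~\ref{teo:main_result} with $y = x/\gamma$: the dichotomy $\eta + \gamma f^*(P_{\,\cdom f^*}(x/\gamma))\le 0$ versus $>0$ becomes exactly the hypotheses of \ref{pro:plc_radial_i}--\ref{pro:plc_radial_ii}, and \eqref{eq:main_result_i}, \eqref{eq:main_result_iimu}, and \eqref{eq:main_result_ii} specialize, after the substitutions $\gamma P_{\,\cdom f^*}(x/\gamma) = P_{\,\cdom\phi^*}(\|x\|/\gamma)\,x/\|x\|$ and $\gamma\prox_{(\mu/\gamma)f^*}(x/\gamma) = \prox_{(\mu/\gamma)\phi^*}(\|x\|/\gamma)\,x/\|x\|$ valid for $x\ne 0$, to the first branches of \eqref{eq:plc_radial_leq0} and \eqref{eq:plc_radial_geq0_prox} and to \eqref{eq:plc_radial_geq0_mu}. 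The case $x=0$ must be handled separately: in \ref{pro:plc_radial_i} the formula trivially yields $(0,0)$, whereas in \ref{pro:plc_radial_ii} symmetry forces $\prox_{(\mu/\gamma)f^*}(0)=0$, so \eqref{eq:plc_radial_geq0_mu} collapses to $\mu = \eta + \gamma\phi^*(0)$ and Theorem~\ref{teo:main_result}\ref{teo:main_result_ii} returns $(0,\eta+\gamma\phi^*(0))$. The only nonroutine step is the clean justification of the two radial reduction formulas; once these are in place, everything else is substitution into Theorem~\ref{teo:main_result}.
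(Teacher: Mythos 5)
Your proposal is correct and follows essentially the same route as the paper: both reduce to Theorem~\ref{teo:main_result} via the radial identities $f^*=\phi^*\circ\norm{\cdot}$, $P_{\,\cdom f^*}y=\bigp{P_{\,\cdom\phi^*}\|y\|/\|y\|}y$, and $\prox_{\lambda f^*}y=\bigp{\prox_{\lambda\phi^*}\|y\|/\|y\|}y$, treating $x=0$ separately through $\prox_{\lambda f^*}(0)=0$ (which the paper obtains from $\phi(0)=-\phi^*(0)$ and the Fenchel--Young identity rather than your symmetry argument). The only point you gloss over is upgrading uniqueness of $\mu$ from the interval $\big]0,\eta+\gamma f^*(P_{\,\cdom f^*}(x/\gamma))\big]$ given by Theorem~\ref{teo:main_result}\ref{teo:main_result_ii} to all of $\RPP$, which the paper obtains from a cited monotonicity lemma.
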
 
\begin{proof}
First, since $\phi$ is even, \cite[Proposition 
13.21]{bauschke2011convex} implies that $\phi^*$ is even and 
\cite[Example 13.8]{bauschke2011convex} yields
\begin{equation}
    f^* = \phi^* \circ \|\cdot\| \label{eq:plc_radial_conjf}.
\end{equation}
Hence,
\begin{align}
\label{eq:plc_radial_inf}
    f(0) = \phi(0) = \inf_{x \in \R} \phi(x) = -\phi^*(0) = -f^*(0),
\end{align}
and it follows from $\phi^* \in \Gamma_0(\R)$ and 
\cite[Lemma~3.1(i) \& Lemma~3.2(ii)]{bricenoarias2023proximity} 
that
\begin{equation}
    (\forall x\in\Hi)\quad P_{\,\cdom f^*}(x)=
    \begin{cases}
        P_{\, \cdom \phi^*}(\norm{x}) 
        \dfrac{x}{\norm{x}},&\text{if}\:\: x\neq 0;\\
        0,&\text{if}\:\: x= 0.
    \end{cases} \label{eq:plc_radial_proj}
    \end{equation}
Therefore \eqref{eq:plc_radial_conjf}, \eqref{eq:plc_radial_inf}, and 
\eqref{eq:plc_radial_proj} yield
\begin{equation}
\label{eq:plc_radial_imgnorm}
    (\forall x \in \Hi) \quad f^*\bigp{P_{\, \cdom f^*} x} = \phi^* 
    \bigp{P_{\, \cdom \phi^*} (\norm{x})}.
\end{equation}
\ref{pro:plc_radial_i}: In this case \eqref{eq:plc_radial_imgnorm} 
yields
\begin{equation}
    \eta + \gamma f^* \bigp{P_{\; \cdom f^*} \bigp{\frac{x}{\gamma}}} 
    \leq 0.
    \label{eq:plc_radial_proof_i}
\end{equation} Therefore, the result follows from Theorem 
\ref{teo:main_result}\ref{teo:main_result_i} and 
\eqref{eq:plc_radial_proj}. \\[\baselineskip]
\ref{pro:plc_radial_ii}:  In this case \eqref{eq:plc_radial_imgnorm} 
yields
\begin{equation}
    \eta + \gamma f^* \bigp{P_{\, \cdom f^*} \bigp{\frac{x}{\gamma}}} 
    > 0. \label{eq:plc_radial_proof_ii}
\end{equation}Hence, it follows from Theorem 
\ref{teo:main_result}\ref{teo:main_result_ii} and \cite[Lemma 
3.2(iv)]{bricenoarias2023proximity} that there exists a unique $\mu 
\in \RPP$ such that
\begin{equation}
    \mu = \eta + \gamma f^*\bigp{\prox_{\frac{\mu}{\gamma} f^*} 
    \bigp{\frac{x}{\gamma}}} = \eta + \gamma 
    \phi^*\bigp{\prox_{\frac{\mu}{\gamma} 
    \phi^*}\bigp{\frac{\norm{x}}{\gamma}}},
    \label{eq:plc_radial_proof_ii_mu}
\end{equation}
and
\begin{equation}
\label{eq:plc_radial_proof_ii_prox}
        \prox_{\gamma \widetilde{f}}(x,\eta) = 
        \begin{cases}
            \bigp{\bigp{1 - \gamma \dfrac{\prox_{\frac{\mu}{\gamma} 
            \phi^{*}} \bigp{\|x\|/\gamma}}{\|x\|}}
            x ,\mu},&\text{if}\:\:x\neq 0;\\[2mm]
            (0,\mu), &\text{if}\:\:x= 0.
        \end{cases}
    \end{equation}
Furthermore, for $x=0$, \eqref{eq:plc_radial_proof_ii_mu} and 
\eqref{eq:plc_radial_proof_ii_prox} yield $\mu = \eta + \gamma 
\phi^* (\prox_{\mu \phi^*/\gamma }(0))$ and $\prox_{\gamma 
\persp{f}}(x,\eta) = (-\gamma \prox_{\mu f^*/\gamma} (0),\mu).$
Next, since $\phi(0) = -\phi^*(0)$ and $\mu \geq 0$, it follows from 
\eqref{eq:fenchel_young_iden} and \eqref{eq:prox_char} that
\begin{align}
\label{eq:plc_radial_proof_prox0}
    0 =\phi(0) + \phi^*(0)  \:\:\Leftrightarrow\:\: 0 \in 
    \frac{\mu}{\gamma} \partial \phi^*(0) \:\:\Leftrightarrow\:\: 0 = 
    \prox_{\frac{\mu}{\gamma}\phi^*}(0).
\end{align}
Therefore, the result follows from 
\eqref{eq:plc_radial_proof_ii_mu}, 
\eqref{eq:plc_radial_proof_ii_prox},
 and \eqref{eq:plc_radial_proof_prox0}. \qed
\end{proof}

\begin{remark}
\label{remark:radial} 
Note that \cite[Proposition~2.3]{combettes2020perspective} can be 
obtained from Proposition~\ref{pro:plc_radial}. Indeed, let us define 
\begin{align}
\label{eq:plc_radialorig_RS}
\mathcal{R} &= \menge{(\nu,\chi) \in \R^2}{\chi + \phi^*(\nu) \le 0} 
\nonumber\\
\text{and}\quad 
\mathcal{S} &= \menge{(\nu,\chi) \in \R^2}{\chi + 
\phi^*\left(P_{\,\cdom \phi^*}\nu\right) \le 0}
\end{align}
and note that $\mathcal{R}$ is nonempty,  closed, and convex, 
since it is the level set of the proper lower semicontinuous convex 
function $(\nu,\chi)\mapsto \chi + \phi^*(\nu)$.
Now let $(x,\eta)$ and $(\overline{\nu},\overline{\chi})$ in 
$\Hi\times\R$.
It follows from \eqref{eq:projection_char} that 
$(\overline{\nu},\overline{\chi})=P_{\mathcal{R}}(\|x\|/\gamma,\eta/\gamma)$
 if and only if
\begin{equation}
\label{eq:plc_radialorig_proj}
(\forall 
(\nu,\chi)\in\mathcal{R})\quad(\nu-\overline{\nu})\left(\frac{\|x\|}{\gamma}-\overline{\nu}\right)+
(\chi-\overline{\chi})\left(\frac{\eta}{\gamma}-\overline{\chi}\right)\le 
0.
\end{equation}
Moreover, since $\mathcal{R}\subset \mathcal{S}$, we have three 
cases:
\begin{enumerate}[label = \normalfont(\roman*)]
\item $(\|x\|/\gamma,\eta/\gamma)\in\mathcal{R}$: In this case we 
have $\norm{x}/\gamma \in \dom \phi^*$, and, then, $P_{\,\cdom 
\phi^*}(\norm{x}/\gamma) = \norm{x}/\gamma$. Hence, it follows 
from Proposition~\ref{pro:plc_radial}\ref{pro:plc_radial_i}
that
\begin{equation}
    \prox_{\gamma f}(x,\eta) = (0,0) \label{eq:plc_radialorig_i}.
\end{equation}

\item 
$(\|x\|/\gamma,\eta/\gamma)\in\mathcal{S}\setminus\mathcal{R}$: 
In this case,
note that $(\nu,\chi)\in\mathcal{R}$ implies 
$\nu\in\dom\phi^*\subset\cdom\phi^*$ and, hence, 
\eqref{eq:projection_char} yields
\begin{align}
\label{eq:plc_radialorig_iidem}
(\forall(\nu,\chi)\in\mathcal{R})\quad 
0&\ge\bigg(\nu-P_{\,\cdom\phi^*}\bigg(\frac{\|x\|}{\gamma}\bigg)\bigg)\bigg(\frac{\|x\|}{\gamma}-P_{\,\cdom\phi^*}\bigg(\frac{\|x\|}{\gamma}\bigg)\bigg)\nonumber\\
&=\bigg(\nu-P_{\,\cdom\phi^*}\bigg(\frac{\|x\|}{\gamma}\bigg)\bigg)\bigg(\frac{\|x\|}{\gamma}-P_{\,\cdom\phi^*}\bigg(\frac{\|x\|}{\gamma}\bigg)\bigg)
 \nonumber\\
&\hspace{45mm}+\Big(\chi-\frac{\eta}{\gamma}\Big)\Big(\frac{\eta}{\gamma}-\frac{\eta}{\gamma}\Big).
\end{align}
Therefore, \eqref{eq:plc_radialorig_proj} implies
$(\overline{\nu},\overline{\chi})=(P_{\,\cdom 
\phi^*}(\|x\|/\gamma),\eta/\gamma)=P_{\mathcal{R}}(\|x\|/\gamma,\eta/\gamma)$
and Proposition~\ref{pro:plc_radial}\ref{pro:plc_radial_i} yields
\begin{align}
\label{eq:plc_radialorig_iiprox}
\prox_{\gamma \widetilde{f}}(x,\eta)&=
\begin{cases}
\bigg(\Big(1-\frac{\gamma\overline{\nu}}{\|x\|}\Big)x,\eta-\gamma\overline{\chi}\bigg),&\text{if}\:\:
 x\neq 0;\\
(0,0),& \text{if}\:\: x=0.
\end{cases} \nonumber \\
&= \begin{cases}
\left(\left(1-\gamma\dfrac{ P_{\, \cdom \phi^*} 
(\norm{x}/\gamma)}{\|x\|}\right)x,0\right),&\text{if}\:\: x\neq 0;\\
(0,0),& \text{if}\:\: x=0.
\end{cases}
\end{align}

\item $(\|x\|/\gamma,\eta/\gamma)\in\R^2\setminus\mathcal{S}$: 
In this case, recalling that $\phi^*(0)=-\phi(0)$, we obtain from 
Proposition \ref{pro:plc_radial}\ref{pro:plc_radial_ii} that 
$\prox_{\gamma \widetilde{f}}(0,\eta)=(0,\eta-\gamma\phi(0))$.
On the other hand, set 
\begin{equation}
\label{eq:defmuchibar}
(\overline{\nu},\overline{\chi})=\left(\prox_{\frac{\mu}{\gamma} 
\phi^{*}} 
\bigp{\frac{\|x\|}{\gamma}},-\phi^*\left(\prox_{\frac{\mu}{\gamma} 
\phi^{*}} \bigp{\frac{\|x\|}{\gamma}}\right)\right),
\end{equation}
where $\mu\in\RPP$ is the unique solution to 
\eqref{eq:plc_radial_geq0_mu} guaranteed by 
Proposition~\ref{pro:plc_radial}\ref{pro:plc_radial_ii}.
It follows from Lemma~\ref{lemma:prox_fenchel} that 
\begin{equation}
\label{eq:proxsubd}
(\forall\nu\in\dom\phi^*)\quad  
(\nu-\overline{\nu})\left(\frac{\|x\|}{\gamma}-\overline{\nu}\right)\le 
\frac{\mu}{\gamma}(\phi^*(\nu)-\phi^*(\overline{\nu})).
\end{equation}
Now, let $(\nu,\chi)\in\mathcal{R}$ and recall that 
$\nu\in\dom\phi^*$. Hence, \eqref{eq:proxsubd}, 
\eqref{eq:plc_radial_geq0_mu}, and \eqref{eq:defmuchibar} yield
\begin{align}
\label{plc_radialorig_R2Sdem}
(\nu-\overline{\nu})\left(\frac{\|x\|}{\gamma}-\overline{\nu}\right)+
(\chi-\overline{\chi})\left(\frac{\eta}{\gamma}-\overline{\chi}\right)
&\le 
\frac{\mu}{\gamma}(\phi^*(\nu)-\phi^*(\overline{\nu}))+(\chi-\overline{\chi})\left(\frac{\eta}{\gamma}-\overline{\chi}\right)
 \nonumber\\
& = \left(\frac{\eta}{\gamma} + 
\phi^*(\overline{\nu})\right)(\phi^*(\nu)-\phi^*(\overline{\nu})) +(\chi+ 
\phi^*(\overline{\nu}))\left(\frac{\eta}{\gamma} + 
\phi^*(\overline{\nu})\right) \nonumber\\
& = \left(\frac{\eta}{\gamma} + \phi^*(\overline{\nu})\right) \left( 
\phi^*(\nu)+\chi\right) \nonumber \\
& = \frac{\mu}{\gamma}\left(\phi^*(\nu) + \chi\right) \nonumber\\ 
&\leq 0.
\end{align}
Therefore, 
$(\overline{\nu},\overline{\chi})=P_{\mathcal{R}}(\|x\|/\gamma,\eta/\gamma)$
and \cite[Proposition 2.3(iii)]{combettes2020perspective} is 
obtained from Proposition~\ref{pro:plc_radial}\ref{pro:plc_radial_ii}.
\end{enumerate}
Altogether, we deduce that \cite[Proposition 
2.3]{combettes2020perspective} is deduced from 
Proposition~\ref{pro:plc_radial}.
Note that, the formulae in 
\cite[Proposition~2.3]{combettes2020perspective} need the 
computation on $\mathcal{R}\subset\R^2$, which can be 
complicated in some instances, as the following example illustrates.
\end{remark}
\begin{example}
In the context of Proposition~\ref{pro:plc_radial}, let $\phi\colon 
x\mapsto x^2/2$.
Then, $\phi^*=\phi$, $\dom\phi^*=\R$, and, for every 
$\tau\in\RPP$, $\prox_{\tau \phi^*}=\Id/(1+\tau)$.
Therefore, given $(x,\eta)\in\Hi\times\R$, 
Proposition~\ref{pro:plc_radial} yields
\begin{equation}
\prox_{\gamma \widetilde{f}}(x,\eta)=
\begin{cases}
(0,0),&\text{if}\:\:\eta+\|x\|^2/(2\gamma)\le0;\\
(0,\eta),&\text{if}\:\:\eta+\|x\|^2/(2\gamma)>0\:\:\text{and}\:\:x=0;\\[1mm]
\left(\frac{\mu}{\gamma+\mu}x,\mu\right),&\text{if}\:\:\eta+\|x\|^2/(2\gamma)>0\:\:\text{and}\:\:x\ne0,
\end{cases}
\end{equation}
where $\mu\in\RPP$ is the unique solution to 
\begin{equation}
\label{eq:nonlin2}
\mu=\eta+\frac{\gamma}{2(\gamma+\mu)^2}\|x\|^2.
\end{equation}
On the other hand, the proximity operator of the perspective 
proposed in \cite[Proposition 2.3]{combettes2020perspective} 
needs the projection onto 
$$\mathcal{R}=\menge{(\nu,\chi)\in\R^2}{\chi+\nu^2/2\le 0},$$ 
which involves additional computations.
\end{example}
\section{Examples}
\label{sec:4}
In this section, we provide several instances in which $f$ is 
non-radial, $\dom f^*$ is not open, and 
Theorem~\ref{teo:main_result} allows us to compute the proximity 
operator of $f$. For this class of functions, the computation of the 
proximity operator of their perspectives is not available in the 
literature.

Let us start with the computation of the proximity operator of the 
perspective of the perspective of
a lower semicontinuous convex function.
\begin{example}
\label{example:perspectivex2} 
Let $\GG$ be a real Hilbert space, let $g \in \Gamma_0(\GG)$, and 
set $f = \persp{g}$. Then, Lemma 
~\ref{lemma:persp_prop}\ref{lemma:persp_propi} yields $f \in 
\Gamma_0(\GG \times \R)$ and, since $f$ is positively 
homogeneous 
\cite[Proposition~2.3(i)]{combettes2018perspective}, we have
\begin{equation}
    \persp{f}:((x,\eta),\delta) \mapsto \begin{cases}
        \eta g\left(\dfrac{x}{\eta}\right), 
        &\text{if}\:\:\eta>0\:\:\text{and}\:\:\delta\ge 0;\\[1.8mm]
        (\rec g)(x),&\text{if}\:\:\eta=0\:\:\text{and}\:\:\delta\ge 0;\\[1mm]
        +\infty, &\text{if}\:\:\eta<0\:\:\:\text{or}\:\:\:\delta < 0.
    \end{cases}
\end{equation}
Moreover, by defining $C = \menge{(x,\eta) \in \GG \times \R}{\eta 
+ g^*(x) \leq 0}$,
it follows from 
Lemma~\ref{lemma:persp_prop}\ref{lemma:persp_propii} that
\begin{equation}
\label{e:proxproj}
f^* = \iota_C\quad\text{and}\quad (\forall \tau\in\RPP)\quad 
\prox_{\tau f^*}=P_{\,\cdom f^*}=P_C.
\end{equation}
Hence, since $g^*\in\Gamma_0(\GG)$, $\dom f^* = C$ is closed.
Now, in order to compute the proximity operator of $\widetilde{f}$, 
fix $(x, \eta)\in \GG \times \R$, $\delta \in \R$, $\gamma \in \RPP$, 
and note that
\begin{align}
    \label{eq:perspersp_delta}
    \delta + \gamma f^*\bigp{P_{\, \cdom f^*} \bigp{\frac{x}{\gamma}, 
    \frac{\eta}{\gamma} }} = \delta + \gamma \iota_C\bigp{P_{C} 
    \bigp{\frac{x}{\gamma}, \frac{\eta}{\gamma} }} =  \delta. 
\end{align}
Therefore, by considering $\Hi=\GG\oplus\R$, we deduce from 
Lemma~\ref{lemma:persp_prop}\ref{lemma:persp_propi} and 
Theorem~\ref{teo:main_result}\ref{teo:main_result_i}
that, if $\delta\le 0$, $\prox_{\gamma\widetilde{f}}$ is computed in 
\eqref{eq:main_result_i}. On the other hand, if $\delta> 0$, 
Theorem~\ref{teo:main_result}\ref{teo:main_result_ii} asserts that
there exists a unique
$\mu \in\left]0,\delta\right]$  solution to $\mu = \delta + \gamma 
f^*(\prox_{\mu f^*/\gamma}(x/\gamma))$ and 
$\prox_{\gamma\widetilde{f}}$ is obtained in 
\eqref{eq:main_result_ii}. Altogether, noting that \eqref{e:proxproj} 
implies 
that $\mu =\delta$, we derive from \eqref{eq:moreau_decomp} and 
again from Theorem~\ref{teo:main_result} that
\begin{align}
\label{eq:prox_perspx2}
    \prox_{\gamma \persp{f}} ((x,\eta),\delta) &= 
    \bigp{(x,\eta) - \gamma P_{C}\bigp{\frac{x}{\gamma}, 
    \frac{\eta}{\gamma} }, \max\{0,\delta\}}\nonumber\\
    &=\bigp{\prox_{\gamma\widetilde{g}}(x,\eta), 
    \max\{0,\delta\}}\nonumber\\
    &=\begin{cases}
    \bigp{x-\gamma P_{\;\cdom 
    g^*}\left(\frac{x}{\gamma}\right),0,\max\{0,\delta\}},&\text{if}\:\:\ell(x,\gamma)\le
     0;\\[2mm]
    \bigp{x-\gamma\prox_{\frac{\nu}{\gamma} 
    g^*}\left(\frac{x}{\gamma}\right),\nu,\max\{0,\delta\}},&\text{if}\:\:\ell(x,\gamma)>0,\\
    \end{cases}
\end{align}
where $\nu\in \; ]0,\ell(x,\gamma)]$ is the unique solution to 
$\nu=\eta+\gamma g^*(\prox_{\nu  g^*/\gamma}(x/\gamma))$
and we denote
$\ell(x,\eta) = \eta+\gamma g^*(P_{\,\cdom g^*}(x/\gamma))$. 

In the particular case when $\GG = \R$ and $ g: \xi \mapsto 
\xi^2/2$, we have $ g =  g^*$, $\dom  g = \R$, and, for every $\tau 
\in \RPP$, $\prox_{\tau  g^*} = \Id /(1+ \tau)$. Therefore, 
\eqref{eq:prox_perspx2} reduces to
\begin{equation}
\prox_{\gamma f}((\xi,\eta),\delta) = \begin{cases}
    (0,0,\max\{0,\delta\}), & \text{if } \eta+\xi^2/(2\gamma) 
    \le0;\\[1mm]
    \left(\frac{\nu}{\gamma+\nu} \xi,\nu, \max\{0,\delta\}\right), & 
    \text{if } \eta+\xi^2/(2\gamma) > 0,
\end{cases}
\end{equation}
where $\nu \in \left]0,\eta+ \xi^2/(2\gamma)\right]$ is the unique 
solution to the cubic equation $\nu = \eta + \gamma 
\xi^2/(2(\gamma + \nu)^2)$.
\end{example}

The following three examples are motivated by penalty methods for 
solving convex-constrained mathematical programming problems 
investigated in \cite{auslender1997asymptotic}. The last example 
also appears in the dual of entropy-penalized transport problems 
\cite{Pegon23}.

\begin{example}
\label{example:nb_entropy}
Set
\begin{equation}
\psi\colon\xi\mapsto
\begin{cases}
-\ln(\xi), & \text{if}\:\:\xi>0; \\
+\infty, & \text{if}\:\:\xi\le 0,
\end{cases}
\end{equation}
set $\varphi=\psi+\iota_{]-\infty,1]}$, and set $f=\varphi^*$.
Since \cite[Example 13.2(iii)]{bauschke2011convex} implies that 
$\psi\in\Gamma_0(\R)$ and $]-\infty,1]$ is closed and convex,
we have $\varphi\in\Gamma_0(\R)$. Moreover, since 
$\dom\psi\cap ]-\infty,1[\neq\emp$, it follows from 
\cite[Theorem~15.3 \& Example~13.2(iii)]{bauschke2011convex} 
and simple computations that
$f\in\Gamma_0(\R)$ and
\begin{equation}
\label{eq:nburg}
    f:\R \to \RX \: : \: \xi \mapsto \begin{cases}
        -1-\ln(-\xi), &\text{if}\:\: \xi < -1; \\  
        \xi, & \text{if}\:\: \xi \ge -1,
    \end{cases}
\end{equation}
from which we obtain
\begin{equation}
    \persp{f}: \R \times \R \to \RX: (\xi,\eta) \mapsto \begin{cases}
        \eta - \eta \ln \left(-\frac{\xi}{\eta}\right), &\text{if}\:\: \eta > 0 
        \:\:\text{and}\:\: \xi < -\eta; \\
        \xi, &\text{if} \:\: \eta > 0 \:\:\text{and}\:\: \xi \ge -\eta; \\
        \max\{0,\xi\}, &\text{if}\:\: \eta = 0; \\
        +\infty, & \text{otherwise}.
    \end{cases}
\end{equation}
Moreover, note that 
\begin{equation}
\label{e:fetoil}
f^*=\varphi^{**}=\psi+\iota_{]-\infty,1]}\colon\xi\mapsto
\begin{cases}
-\ln(\xi), & \text{if}\:\:0<\xi\le 1; \\
+\infty, & \text{otherwise}
\end{cases}
\end{equation}
and, thus,
$\dom f^* = \left]0,1\right]$ which is neither open nor closed and 
$P_{\,\cdom f^*}: \xi \mapsto \med\{0,\xi,1\}$. 
Now, in order to compute the proximity operator of $\persp{f}$, fix 
$\xi \in \R$, $\eta \in \R$, $\gamma \in \RPP$, and note that
\begin{equation}
    \eta + \gamma f^*\left(P_{\,\cdom f^*} 
    \left(\frac{\xi}{\gamma}\right)\right) = \eta 
    -\gamma\ln\left(\med\left\{0,\frac{\xi}{\gamma},1\right\}\right).
\end{equation}
Therefore, by considering $\Hi = \R$, Theorem 
\ref{teo:main_result} yields
\begin{align}
\label{e:proxauzx}
    \prox_{\gamma \persp{f}} (\xi,\eta) = \begin{cases}
        \left(\xi- \gamma P_{\, \cdom f^*} \left(\frac{\xi}{\gamma}\right), 
        0\right), & \text{if}\:\: \eta 
        -\gamma\ln\left(\med\left\{0,\frac{\xi}{\gamma},1\right\}\right) 
        \leq 0; \\[3mm]
        \left(\xi - \gamma \prox_{\frac{\mu}{\gamma} 
        f^*}\left(\frac{\xi}{\gamma}\right), \mu\right), & \text{if}\:\: \eta 
        -\gamma\ln\left(\med\left\{0,\frac{\xi}{\gamma},1\right\}\right) > 
        0.
    \end{cases}
\end{align}
where $\mu \in ~]0, \eta -\gamma \ln 
\left(\med\left\{0,\xi/\gamma,1\right\}\right)[$ 
is the unique solution to 
\eqref{eq:main_result_iimu}.

Note that, it follows from \eqref{e:fetoil},
$]-\infty,1]\cap\dom\psi=]0,1]\neq\emp$, and 
\cite[Proposition~24.47]{bauschke2011convex} that
$\prox_{\mu f^*/\gamma}=P_{\,]-\infty,1]}\circ\prox_{\mu 
\psi/\gamma}$.
Moreover, \cite[Example~24.40]{bauschke2011convex}
implies that $\prox_{\mu 
\psi/\gamma}(\xi/\gamma)=(\xi+\sqrt{\xi^2+4\mu\gamma})/(2\gamma)$.
 
Observing that 
\begin{equation}
\dfrac{\xi+\sqrt{\xi^2+4\mu\gamma}}{2\gamma}\ge1\:\:\Leftrightarrow\:\:
 \xi\ge\gamma-\mu,
\end{equation}
we obtain 
\begin{equation}
\prox_{\frac{\mu}{\gamma}f^*}\Big(\frac{\xi}{\gamma}\Big)=\begin{cases}
        \dfrac{\xi+\sqrt{\xi^2+4\mu\gamma}}{2\gamma}, & \text{if } \xi 
        < \gamma-\mu;\\
        1, & \text{if } \xi\ge \gamma-\mu
    \end{cases}
\end{equation}
and \eqref{eq:main_result_iimu} reduces to
\begin{equation}
\mu=
\begin{cases}
\eta,&\text{if}\:\: \xi\ge \gamma-\eta;\\ 
\eta - \gamma \ln 
\bigg(\frac{\xi+\sqrt{\xi^2+4\mu\gamma}}{2\gamma}\bigg),&\text{if}\:\:
 \xi< \gamma-\eta.
\end{cases}
\end{equation}
Hence, since 
for all $(\xi,\eta) \in \Hi \times \R$, we have
\begin{equation}
\label{e:divis}
\eta -\gamma\ln\left(\med\left\{0,\frac{\xi}{\gamma},1\right\}\right) \le 
0\quad \Leftrightarrow\quad \eta \le 0\quad\text{and}\quad \xi \ge 
\gamma e^{\eta/\gamma}
\end{equation}
and 
\begin{align}
\eta -\gamma\ln\left(\med\left\{0,\frac{\xi}{\gamma},1\right\}\right) > 
0 \quad\Leftrightarrow\quad
&(\eta > 0 \:\:\text{and}\:\: \xi \ge \gamma - \eta) \nonumber\\
\text{or}\:\: &(\xi < \min \{\gamma e^{\eta/\gamma} , \gamma - 
\eta\}),
\end{align}
we deduce that \eqref{e:proxauzx} can be explicitly written as
\begin{align}
    \prox_{\gamma \persp{f}}(\xi,\eta) = \begin{cases}
        \left(\max\{0,\xi-\gamma\},0\right), & \text{if } \eta \le 0 \text{ 
        and } \xi \ge \gamma e^{\frac{\eta}{\gamma}}; \\
        (\xi-\gamma,\eta), & \text{if } \eta > 0 \text{ and } \xi \ge 
        \gamma -\eta; \\[2mm]
        \left(\dfrac{\xi-\sqrt{\xi^2+4\mu\gamma}}{2}, \mu\right), & 
        \text{if } \xi< \min\left\{\gamma e^{\frac{\eta}{\gamma}}, 
        \gamma-\eta \right\},
    \end{cases}
\end{align} where $\mu \in ~]0,\eta - \gamma 
\ln\left(\max\{0,\xi/\gamma\}\right)[$ is the unique solution to
\begin{equation}
\label{e:eqmuex}
\mu = \eta - \gamma \ln 
\bigg(\frac{\xi+\sqrt{\xi^2+4\mu\gamma}}{2\gamma}\bigg).
\end{equation}


\end{example}

\begin{example}
\label{example:nboltz_shan_entropy}
Let $n \in \N$ and consider
\begin{equation}
    f: \R^n \to \RPP: x \mapsto \sum_{i=1}^n e^{x_i-1}.
\end{equation}
Then, $f\in\Gamma_0(\R^n)$ and 
\begin{equation}
    \persp{f}: \R^n \times \R \to \RX: (x,\eta) \mapsto \begin{cases}
        \eta \sum_{i=1}^{n} e^{\frac{x_i}{\eta} - 1}, & \text{if}\:\:\eta > 0; 
        \\[1mm]
        0, & \text {if}\:\:\eta = 0\:\:\text{and}\:\: x\le 0;\\
        +\infty, &\text{otherwise}.
    \end{cases}
\end{equation}
In order to compute the proximity operator of $\persp{f}$, note that, 
in view of 
\cite[Example~13.2(v), Proposition~13.23 \& 
Proposition~13.30]{bauschke2011convex}, we obtain
\begin{equation}
    f^*:\R^n \to \RX: x \mapsto \sum_{i=1}^n \phi(x_i),
\end{equation}
where
\begin{align}
\label{eq:xlnx}
    \phi&: \R \to \RX :\xi \mapsto \begin{cases}
        \xi \ln (\xi), & \text{if}\:\:\xi >0; \\
        0 ,& \text{if}\:\: \xi = 0; \\
        +\infty, &\text{if}\:\:\xi < 0.
    \end{cases}
\end{align}
Hence, $\dom f^* = \RP^n$ is closed and $P_{\,\cdom f^*} = 
\max\{0,\cdot\}$, where we denote $\max\{0,\cdot\}\colon y \mapsto 
(\max\{0,y_i\})_{1\le i\le n}$. Now fix $(x,\eta)\in\R^n\times\R$, 
$\gamma\in\RPP$, and note that
\begin{equation}
\eta+\gamma f^*\left(P_{\,\cdom 
f^*}\left(\frac{x}{\gamma}\right)\right)=\eta + \sum\limits_{i\in I_+(x)} 
x_i\ln\left(\frac{x_i}{\gamma}\right),
\end{equation}
where
$I_+(x)=\menge{i\in\{1,\ldots,n\}}{x_i>0}$ and the sum over the 
empty set is zero.
Therefore, by setting $\Hi=\R^n$,
Theorem~\ref{teo:main_result} yields
\begin{align}
\label{eq:prox_nboltz_shan}
    \prox_{\gamma \persp{f}}(x,\eta) & = \begin{cases}
        \left(x - \gamma \max\left\{0,\dfrac{x}{\gamma}\right\}, 0 \right), 
        & \text{if}\:\: \eta + \sum\limits_{i\in I_+(x)} 
        x_i\ln\left(\dfrac{x_i}{\gamma}\right) \leq 0; \\[2mm]
        \left(x-\gamma \prox_{\frac{\mu}{\gamma} f^*} 
        \left(\dfrac{x}{\gamma}\right),\mu\right), & \text{if}\:\: \eta 
        +\sum\limits_{i\in I_+(x)} x_i\ln\left(\dfrac{x_i}{\gamma}\right) > 
        0,
    \end{cases}
\end{align}
where $\mu \in \; ]0,\eta + \sum_{i\in I_+(x)} x_i\ln(x_i/\gamma)]$ is 
the unique solution to \eqref{eq:main_result_iimu}.

Furthermore, in view of \cite[Proposition 
16.9]{bauschke2011convex}, we have $\prox_{\mu f^* /\gamma} 
(x/\gamma) \in \dom \partial(\mu f^*/\gamma) = \RPP^n$. Hence, 
we obtain from \eqref{eq:prox_char} that, for every $p\in\RPP^n$,
\begin{align}
p = 
\prox_{\frac{\mu}{\gamma}f^*}\left(\frac{x}{\gamma}\right)\quad&\ifaf\quad
 \frac{x}{\gamma}-p \in \frac{\mu}{\gamma} \partial f^*(p)\nonumber\\
&\ifaf\quad  (\forall i \in \{1,...,n\}) \quad \frac{x_i}{\gamma}-p_i \in 
\frac{\mu}{\gamma} \partial \phi(p_i) \nonumber \\
& \ifaf\quad  (\forall i \in \{1,...,n\})\quad\frac{x_i}{\mu}- 1 = \ln(p_i) + 
\frac{\gamma}{\mu}p_i \nonumber \\
& \ifaf\quad  (\forall i \in 
\{1,...,n\})\quad\frac{\gamma}{\mu}e^{\frac{x_i}{\mu}-1} = 
\frac{\gamma}{\mu} p_i \, e^{\frac{\gamma}{\mu} p_i} \nonumber \\
& \ifaf\quad  (\forall i \in \{1,...,n\})\quad p_i = \frac{\mu}{\gamma} 
W_0\left(\frac{\gamma}{\mu} e^{\frac{x_i}{\mu}-1}\right),
\end{align}
where $W_0$ is the principal branch of the Lambert W-function. 
Altogether,
by denoting $\min\{0,\cdot\}\colon y\mapsto -\max\{0,-y\}$ and, for 
every $y\in\R^n$, $e^y=(e^{y_i})_{1\le i\le n}$ and 
$W_0(y)=(W_0(y_i))_{1\le i\le n}$,
\eqref{eq:prox_nboltz_shan} reduces to
\begin{align}
\label{eq:prox_nboltz_shan2}
    \prox_{\gamma \persp{f}}(x,\eta) & = \begin{cases}
        \left(\min\left\{0,x\right\}, 0 \right), & \text{if } \eta + 
        \sum\limits_{i\in I_+(x)} x_i\ln(x_i/\gamma) \leq 0; \\[2mm]
        \left(x-\mu W_0\left(\frac{\gamma}{\mu} 
        e^{\frac{x}{\mu}-1}\right),\mu\right), & \text{if } \eta 
        +\sum\limits_{i\in I_+(x)} x_i\ln(x_i/\gamma) > 0,
    \end{cases}
\end{align}
where $\mu$ is the unique solution to 
\eqref{eq:main_result_iimu}, which reduces to
\begin{align}
\label{eq:Wlambert_mu}
    \mu &= \eta + \mu \sum_{i=1}^n W_0\left(\frac{\gamma}{\mu} 
    e^{\frac{x_i}{\mu}-1}\right) \ln 
    \left(\frac{\mu}{\gamma}W_0\left(\frac{\gamma}{\mu} 
    e^{\frac{x_i}{\mu}-1}\right)\right).
\end{align}
Since $W_0$ is continuous and strictly increasing in $\RP$ 
\cite{Corl96}, $\mu$ can be computed via standard 
one-dimensional root finding numerical schemes \cite[Chapter 
9]{press2007numerical}.

\end{example}

\begin{example}
Let $n \in \N$ and let
\begin{equation}
    f: \R^n \to \R : x \mapsto \ln\left(\sum_{i=1}^n e^{x_i}\right).
\end{equation} 
Then $f \in \Gamma_0(\R^n)$ and
\begin{equation}
    \persp{f}: \R^n \times \R \to \RX: (x,\eta) \mapsto \begin{cases}
        \eta \ln\left(\sum_{i=1}^n e^{\frac{x_i}{\eta}}\right), & \text{if } 
        \eta > 0; \\
        \max_{1\le i\le n}x_i, & \text{if } \eta =0; \\
        +\infty,& \text{if } \eta < 0,
    \end{cases}
\end{equation}
which appears naturally in the dual of entropy-penalized transport 
problems \cite{Pegon23}.

In order to compute the proximity operator of $\persp{f}$, set $\phi$ 
as in \eqref{eq:xlnx} and let
\begin{equation}
    \triangle = \menge{x \in \RP^n}{\sum_{i=1}^n x_i = 1}
\end{equation} be the probability simplex in $\R^n$. Then 
\cite[Example 3.25]{boyd2004convex} yields
\begin{equation}
    f^*:\: \R^n \to \RX: x \mapsto \begin{cases}
        \sum_{i=1}^n \phi(x_i), &\text{if} \:\: \sum_{i=1}^n x_i = 1;\\
        +\infty, & \text{otherwise,}
    \end{cases}
\end{equation}
and hence, $\dom f^* = \triangle$ is closed. Now, fix $x \in \R^n$, 
$\eta \in \R$, and $\gamma \in \RPP$. By setting $\Hi = \R^n$,
Theorem \ref{teo:main_result} implies
\begin{align}
\label{eq:prox_logsum}
    \prox_{\gamma \persp{f}}(x,\eta) =\begin{cases}
        \left(x-\gamma P_{\triangle}\left(\frac{x}{\gamma}\right), 
        0\right), & \text{if}\:\: \eta + \gamma 
        f^*\left(P_{\triangle}\left(\frac{x}{\gamma}\right)\right) \le 0, 
        \\[2mm]
        \left(x-\gamma \prox_{\frac{\mu}{\gamma}f^*} 
        \left(\frac{x}{\gamma}\right),\mu\right), & \text{if}\:\: \eta + 
        \gamma f^*\left(P_{\triangle}\left(\frac{x}{\gamma}\right)\right) 
        > 0,
    \end{cases}
\end{align}
where $\mu \in\left]0,\eta + \gamma 
f^*\left(P_{\triangle}(x/\gamma)\right)\right]$ is the unique solution 
to \eqref{eq:main_result_iimu}.
Moreover, since \cite[Proposition 16.9]{bauschke2011convex} yield 
$\prox_{\mu f^*/\gamma}(x/\gamma)\in\dom \partial(\mu f^*/ 
\gamma) = \triangle\cap\RPP^n$, we obtain from 
\eqref{eq:prox_char} that, for every $p \in \triangle\cap\RPP^n$,
\begin{align}
\label{eq:logsum_Wlambert}
    p = \prox_{\frac{\mu}{\gamma} f^*}\left(\frac{x}{\gamma}\right) & 
    \ifaf\frac{x}{\gamma} - p \in \frac{\mu}{\gamma}\partial f^*(p) 
    \nonumber \\
    &\ifaf
    (\exists \lambda \in \R)(\forall i \in \{1,...,n\}) \quad \frac{x_i - 
    \gamma p_i}{\mu} = \ln(p_i) + 1 + \lambda \nonumber\\
    & \ifaf (\exists \lambda \in \R)(\forall i \in \{1,...,n\}) \quad 
    \frac{x_i}{\mu} -1-\lambda = \ln(p_i) +\frac{\gamma}{\mu} 
    p_i\nonumber\\[1mm]
    & \ifaf (\exists \lambda \in \R)(\forall i \in \{1,...,n\}) \quad 
    \frac{\gamma}{\mu} e^{\frac{x_i}{\mu} -1-\lambda} = 
    \frac{\gamma}{\mu}p_i\,e^{\frac{\gamma}{\mu}p_i} 
    \nonumber\\[1mm]
    & \ifaf (\exists \lambda \in \R)(\forall i \in \{1,...,n\})\quad p_i =  
    \frac{\mu}{\gamma}W_0\left(\frac{\gamma}{\mu} 
    e^{\frac{x_i}{\mu} -1-\lambda}\right),
\end{align}
where $W_0$ is the principal branch of the Lambert W-function. 
Note that by summing over $i$ in \eqref{eq:logsum_Wlambert} we 
obtain
\begin{align}
    \frac{\mu}{\gamma}\sum_{i=1}^n W_0 \left(\frac{\gamma}{\mu} 
    e^{\frac{x_i}{\mu} -1-\lambda}\right) = 1.
\end{align}
Altogether, \eqref{eq:prox_logsum} reduces to
\begin{align}
\label{eq:prox_logsumfinal}
    \prox_{\gamma \persp{f}}(x,\eta) =\begin{cases}
        \left(x-\gamma P_{\triangle}\left(\frac{x}{\gamma}\right), 
        0\right), & \text{if}\:\: \eta + \gamma 
        f^*\left(P_{\triangle}\left(\frac{x}{\gamma}\right)\right) \le 0, 
        \\[2mm]
        \left(x-\mu W_0\left(\frac{\gamma}{\mu} e^{\frac{x}{\mu} - 
        \Lambda}\right) ,\mu\right), & \text{if}\:\: \eta + \gamma 
        f^*\!\left(\!P_{\triangle}\left(\frac{x}{\gamma}\right)\right) > 0,
    \end{cases}
\end{align} where we denote $\Lambda = (\lambda + 1)_{1 \leq 
i\leq n}$, and $\mu$ and $\lambda$ are the solution to the 
nonlinear system of equations
\begin{align}
\label{eq:logsum_system}
    \begin{cases}
    \mu &= \eta + \mu \sum_{i=1}^n W_0\left(\frac{\gamma}{\mu} 
    e^{\frac{x_i}{\mu} -1-\lambda}\right) 
    \ln\left(\frac{\mu}{\gamma}W_0\left(\frac{\gamma}{\mu} 
    e^{\frac{x_i}{\mu} -1-\lambda}\right)\right); \\[2mm]
    1 & = \frac{\mu}{\gamma}\sum_{i=1}^n W_0 
    \left(\frac{\gamma}{\mu} e^{\frac{x_i}{\mu} -1-\lambda}\right),
    \end{cases}
\end{align}
which can be solved, for instance, by Newton's type methods.
\end{example}

\section{Conclusions}
In summary, we provide an explicit formula for the proximity 
operator of the perspective function of any proper lower 
semicontinuous convex function defined in real Hilbert spaces. The 
formula needs to solve a scalar nonlinear equation which can be 
efficiently solved by several one-dimensional root-finding numerical 
schemes. Our result generalizes 
\cite{combettes2018perspectiveprox} and 
\cite{combettes2020perspective}, valid only under additional 
assumptions. Finally, we derive several new formulae of proximity 
operators of perspective functions arising in penalization of 
mathematical programming problems appearing, e.g., in 
entropy-penalized optimal transport problems.

\textbf{Acknowledgments.} The work of Luis M. Brice\~no-Arias is 
supported by  Centro de Modelamiento Matem\'atico (CMM), 
FB210005, BASAL fund for centers of excellence, FONDECYT 
1190871, and FONDECYT 1230257 from ANID-Chile.

\end{document}